\newcommand{\bd}{{\mathbb{D}}}
\newcommand{\bz}{{\mathbb{Z}}}
\newcommand{\bc}{{\mathbb{C}}}
\newcommand{\bt}{{\mathbb{T}}}
\newcommand{\cf}{{\mathcal{F}}}
\newcommand{\ct}{{\mathcal{T}}}
\newcommand{\css}{{\mathcal{S}}}
\renewcommand{\l}{\lambda}
\newcommand{\s}{\sigma}
\newcommand{\ep}{\varepsilon}
\renewcommand{\r}{\rho}
\newcommand{\p}{\varphi}
\newcommand{\hp}{{\hat\varphi}}
\renewcommand{\th}{\theta}
\newcommand{\oo}{\Omega}
\renewcommand{\gg}{\Gamma}
\newcommand{\z}{\zeta}
\newcommand{\nt}{\noindent}
\newcommand{\bsl}{\backslash}
\newcommand{\ovl}{\overline}
\newcommand{\lp}{\left(}
\newcommand{\rp}{\right)}
\numberwithin{equation}{section}
\newtheorem{theorem}{Theorem}[section]
\newtheorem{corollary}[theorem]{Corollary}
\newtheorem{proposition}[theorem]{Proposition}
\theoremstyle{definition}
\newtheorem{definition}[theorem]{Definition}
\begin{document}

\title[Discrete spectra of Bergman--Toeplitz operators]
{On discrete spectra of Bergman--Toeplitz operators with harmonic symbols}

\author{L. Golinskii}
\address{B. Verkin Institute for Low Temperature Physics and Engineering, 47 Nauky ave., Kharkiv 61103, Ukraine}
\email{golinskii@ilt.kharkov.ua; leonid.golinskii@gmail.com}

\author{S. Kupin}
\address{IMB, CNRS, Universit\'e de Bordeaux, 351 ave. de la Lib\'eration, 33405 Talence Cedex, France}
\email{skupin@math.u-bordeaux.fr}

\author{J. Leblond}
\address{FACTAS, INRIA Sophia Antipolis-M\'editerran\'ee, 2004 route des Lucioles - BP93, 06902 Sophia Antipolis Cedex, France}
\email{juliette.leblond@inria.fr}

\author{M. Nemaire}
\address{IMB, CNRS, Universit\'e de Bordeaux, 351 ave. de la Lib\'eration, 33405 Talence Cedex, France}
\email{Masimba.Nemaire@math.u-bordeaux.fr}

\address{FACTAS, INRIA Sophia Antipolis-M\'editerran\'ee, 2004 route des Lucioles - BP93, 06902 Sophia Antipolis Cedex, France}
\email{Masimba.Nemaire@inria.fr}

\subjclass[2010]{Primary: 47B35; Secondary: 47A55, 30H20, 3010}
\keywords{Discrete spectrum of a perturbed operator, Lieb--Thirring type inequality, Blaschke type inequality, Hardy--Toeplitz operator, Bergman--Toeplitz operator}

\begin{abstract}
In the present article, we study the discrete spectrum of certain bounded Toeplitz operators with harmonic symbol  
on a Bergman space. Using the methods of classical perturbaton theory and recent results by Borichev--Golinskii--Kupin and 
Favorov--Golinskii, we obtain a quantitative result on the distribution of the discrete spectrum of the operator in the unbounded (outer) 
component of its Fredholm set.
\end{abstract}

\maketitle

\section{Introduction}\label{s0}
Various problems of modern analysis require the study of certain classes of ``model" operators. One of the important families is the class 
of Toeplitz operators and operators related to them. Probably, the most classical objects of this kind are Toeplitz operators on Hardy spaces of 
analytic functions, see Nikolski \cite{nk1} for a complete account on the subject. The applications of operators of this class can be found 
in Nikolski \cite{nk2}. Another ``similar" class is the family of Toeplitz operators on Bergman spaces. Their study started in late 80's of 
the last century, see Zhu \cite{zhu} for a nice overview of the topic.

We proceed with some definitions. For a function $\p\in L^\infty(\bt)$, the \emph{Hardy--Toeplitz operator} $T_\p: H^2(\bt)\to H^2(\bt)$ is defined as
\begin{equation}\label{e02}
T_\p h=P_+(\p h), \quad h\in H^2(\bt),
\end{equation}
where $P_+$ is the well-known Riesz othogonal projection from $L^2(\bt)$ to $H^2(\bt)$, see Garnett \cite{ga}. The function $\p$ is called a 
\emph{symbol} of the operator. For the sake of brevity, we call operator $T_\p$ \eqref{e02} an HT-operator.

The definition of a \emph{Bergman--Toeplitz operator} $T_\psi$ (a BT-operator, for short), is rather similar to the above one. 
Indeed, let $L^2_a(\bd)$ be the closed subspace in $L^2(\bd)$ of analytic on $\bd$ functions. Given $\psi\in L^\infty(\bd)$, set 
\begin{equation}\label{e03}
T_\psi: L^2_a(\bd)\to L^2_a(\bd), \quad T_\psi h=\hat P_+(\psi h), 
\end{equation}
where $\hat P_+$ is the orthogonal projection acting from $L^2(\bd)$ to $L^2_a(\bd)$, see Zhu \cite[Chapter 7]{zhu}.

For a function $\p\in L^\infty(\bt)$, it is sometimes convenient to consider a harmonic function $\hp$ on $\bd$, the harmonic extension of $\p$ to $\bd$, given by
\begin{equation}\label{e021}
\hp(z)=\int_\bt \frac{1-|z|^2}{|t-z|^2}\p(t)\, m(dt), \quad z\in\bd.
\end{equation}
Here, $m$ is the probability Lebesgue measure on $\bt$. Certainly, $\hp\in L^\infty(\bd)$ and $\|\hp\|_\infty\le\|\p\|_\infty$.

Given an HT-operator $T_\p$ (with its symbol defined on $\bt$), we consider its \emph{associated BT-operator} $T_\hp$ with symbol 
$\hp$ given by \eqref{e021}. Notice that even though we use ``similar-looking" notation for an HT-operator $T_\p$ and a BT-operator $T_\hp$, 
the confusion is not possible since the functions $\p$ and $\hp$ are defined on $\bt$ and $\bd$, respectively. The domains of definitions of 
corresponding symbols will be always clear from the context of the discussion.

We shall be concerned with Toeplitz operators $T_\p, T_\hp$ having symbols defined as follows. Set
\begin{equation}\label{e04}
\p(t):=\bar g(t) +f(t), \quad t\in\bt, \quad f,g\in H^\infty(\bt).
\end{equation}
Clearly, we have
\begin{equation}\label{e041}
\psi(z):=\hp(z)=\bar g(z)+f(z), \quad z\in \bd.
\end{equation}
It is plain that $\hp$ has the non-tangential boundary values on the unit circle
$$
\hp(t)=\lim_{r\to 1-}\hp(rt), \quad \mathrm{for \ a.\,e.} \ t\in\bt,
$$
and these boundary values coincide with $\p$ a.e. on $\bt$. 

Despite the similarity of definitions \eqref{e02}, \eqref{e03}, the BT- operators exhibit considerably reacher spectral behavior as compared to HT-operators. 
For instance, the essential spectrum 
of HT-operator $T_\p$ is connected, see Widom \cite{wi}, while, in general,  the essential spectrum of BT-operator $T_\psi$ is not. 
There are non-trivial compact BT-operators with quite simple (even radial) symbols \cite[Sections 7.2, 7.3]{zhu}, while a compact HT-operator 
is necessarily zero \cite[Part B, Chapter 4]{nk1}.

Sundberg--Zheng \cite{suzh} showed that there are BT-operators with harmonic symbols having isolated eigenvalues in their spectrum. In subsequent papers, 
Zhao--Zheng \cite{zzh3}, Guan--Zhao \cite{gzh1} and Guo--Zhao--Zheng \cite{gzh2} presented a class of BT-operators with harmonic symbols posessing ``rather big" 
\emph{discrete spectrum}, that is, the set of isolated eigenvalues of finite algebraic multiplicity. 

So, in contrast to HT-operators, the notion of the discrete spectrum of a BT-operator with harmonic symbol makes sense. The study of the pro\-perties 
of the discrete spectrum for BT-operators with symbols \eqref{e041} is the core of the present paper. Unlike the articles \cite{gzh1,gzh2,zzh3}, 
our results are essentially based on the perturbation techniques from operator theory and function-theoretic results of Borichev--Golinskii--Kupin \cite{bgk1, bgk2} 
and Favorov--Golinskii~\cite{fago15}.

We also need the definition of the Sobolev space $W^{1,2}(\bt)$ of absolutely continuous functions on the unit circle $\bt$ with derivative in $L^2$:
$$
W^{1,2}(\bt):=\{h: \bt\to \bc, \ h\in AC, \ h'\in L^2(\bt)\}.
$$

For further purposes, we would like to introduce two closely related cha\-ra\-cte\-ristics of compact sets on the complex plane $\bc$. 
The following definitions are borrowed from Perkal \cite[Section 2]{per} and Peller \cite[Section 4]{pel}, respectively.

\begin{definition}\label{d1}
Let $r>0$. A closed set $E\subset\bc$ is called {\sl $r$-convex} if
$$ 
\bc\bsl E=\bigcup\Bigl\{B(x,r): \ B(x,r)\subset \bc\bsl E\Bigr\}, 
$$
that is, the complement to $E$ can be covered with open disks of a fixed radius $r>0$, which lie in that complement.
\end{definition}

\begin{definition}\label{d2}
A compact set $K\subset\bc$ is called {\sl circularly convex}, if there is $r>0$ such that for each $\l\in\bc\bsl K$ with ${\rm dist} (\l,K)<r$
there are points $\mu\in\partial K$ and $\nu\in \bc\bsl K$ so that
$$ 
|\mu-\nu|=r, \quad \l\in (\mu,\nu], \quad \{\z:\ |\nu-\z|<r\}\subset \bc\bsl K. 
$$
\end{definition}

\smallskip
For example, if $K$ is a convex set, or the boundary $\partial K$ is of $C^2$-class (without intersections and cusps), then $K$ is a circularly convex set. 

Note that the later definition is a bit more stringent than the former one. When $K$ is a (closed) Jordan curve (a rectifiable continuous curve 
with no self-intersections), one can also see that the above definitions are equivalent. 

A short reminder on standard notions and notations from operator theory is given in Subsection \ref{s11} below. For instance, see \eqref{e7} for 
the notion of the unbounded (outer) open component of the Fredholm domain $\cf_0(T)$.

The main result of this note is the following theorem.

\begin{theorem}\label{t1}
Let $T_\p$ be an HT-operator with the symbol $\p$ \eqref{e04} from $W^{1,2}(\bt)$, $\hp$ its harmonic extension \eqref{e021}, and $T_\hp$ be the BT-operator
associated to $T_\p$. Assume that the spectrum $\s(T_\p)$ is a circularly convex set. Then, for each $\ep>0$
\begin{equation}\label{btdisc0}
\sum_{\l\in\s_d(T_\hp)\cap\cf_0(T_\p)} {\rm dist}^{3+\ep}\,(\l,\s(T_\p))\le C(\p,\ep)\,\|\p'\|_2^2.
\end{equation}
\end{theorem}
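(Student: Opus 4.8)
The plan is to realize the eigenvalues of $T_\hp$ lying in $\cf_0(T_\p)$ as the zeros of a regularized perturbation determinant, and then to feed the growth of this determinant into the function-theoretic machinery of Favorov--Golinskii and Borichev--Golinskii--Kupin. First I would pin down the Fredholm geometry. Since $\p\in W^{1,2}(\bt)\subset C(\bt)$, the symbol $\hp$ is continuous on $\ovl{\bd}$, and the continuous-symbol calculus gives that both $T_\p$ and $T_\hp$ have essential spectrum equal to the curve $\p(\bt)$, with the Fredholm index of $T_\hp-\l$ equal to that of $T_\p-\l$ (minus the winding number of $\p-\l$). In particular $\cf_0(T_\p)$ lies in the index-zero Fredholm domain of $T_\hp$, so $\s_d(T_\hp)\cap\cf_0(T_\p)$ consists of genuine isolated eigenvalues, and for $\l$ in the unbounded component one has ${\rm dist}(\l,\s(T_\p))={\rm dist}(\l,\p(\bt))$.

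Next I would set up the perturbation. Writing $\hp=\bar g+f$ gives $T_\hp=T_f+T_{\bar g}$ on $L^2_a(\bd)$, and the obstruction to normality is measured by the Bergman-space Hankel operators $H_f,H_{\bar g}$. The key analytic input is that membership of these Hankel operators in $\mathfrak{S}_2$ is governed by the Dirichlet integrals $\int_\bd|f'|^2\,dA$ and $\int_\bd|g'|^2\,dA$, and since $\int_\bd|g'|^2\,dA\asymp\sum_n n|b_n|^2\le\sum_n n^2|b_n|^2\le\|\p'\|_2^2$ (and similarly for $f$), all relevant Hankel and commutator terms are Hilbert--Schmidt with $\mathfrak{S}_2$-norm $\le C\|\p'\|_2$. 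Using this, I would produce a normal operator $N$ on $L^2_a(\bd)$ with $\s(N)\subseteq\s(T_\p)$ and $K:=T_\hp-N\in\mathfrak{S}_2$, $\|K\|_{\mathfrak{S}_2}\le C(\p)\,\|\p'\|_2$, and then form
\[
\mathfrak{d}(\l):={\det}_2\bigl(I+K(N-\l)^{-1}\bigr),\qquad \l\in\bc\bsl\s(T_\p),
\]
an analytic function with $\mathfrak{d}(\infty)=1$ whose zeros in $\cf_0(T_\p)$ are exactly the eigenvalues we wish to count.

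The third step is the growth estimate and the Blaschke-type conclusion. Because $N$ is normal with $\s(N)\subseteq\s(T_\p)$, its resolvent obeys $\|(N-\l)^{-1}\|=1/{\rm dist}(\l,\s(N))\le1/{\rm dist}(\l,\s(T_\p))$, and the standard $\mathfrak{S}_2$-determinant estimate yields
\[
\log|\mathfrak{d}(\l)|\le\tfrac12\|K\|_{\mathfrak{S}_2}^2\,\|(N-\l)^{-1}\|^2\le\frac{C(\p)\,\|\p'\|_2^2}{{\rm dist}^2(\l,\s(T_\p))}.
\]
This is a growth bound of order $2$ in the reciprocal distance to the compact circularly convex set $\s(T_\p)$. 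Applying the Favorov--Golinskii / Borichev--Golinskii--Kupin theorem on zero sets of analytic functions with such boundary growth, the circular convexity supplies the geometric gain of one extra power, so the zeros $\l$ of $\mathfrak{d}$ satisfy
\[
\sum_{\l}{\rm dist}^{\,2+1+\ep}(\l,\s(T_\p))\le C(\p,\ep)\,\|\p'\|_2^2;
\]
restricting the sum to $\cf_0(T_\p)$ only decreases the left-hand side, which gives \eqref{btdisc0} with exponent $3+\ep$.

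The main obstacle is the construction of the normal reference $N$: one must bridge the Hardy and Bergman settings so that $\s(N)$ is controlled by $\s(T_\p)$ (allowing the resolvent estimate to be phrased in terms of ${\rm dist}(\cdot,\s(T_\p))$), while keeping $T_\hp-N$ Hilbert--Schmidt with norm sharply bounded by $\|\p'\|_2$. Equivalently, the essential normality of $T_\hp$ must be quantified so that its non-normal part is $\mathfrak{S}_2$-small, and the relevant Hankel and commutator norms must be estimated by the single quantity $\|\p'\|_2^2$ rather than merely by a Dirichlet integral; keeping the reciprocal-distance power equal to $2$ throughout is precisely what pins the final exponent at $3+\ep$.
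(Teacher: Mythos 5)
Your overall architecture (Hilbert--Schmidt perturbation of a reference operator, regularized determinant, growth of order $2$ in the reciprocal distance, Blaschke-type condition with the circular convexity supplying the extra power, hence exponent $2\cdot1+1+\ep$) is exactly the mechanism inside the Favorov--Golinskii theorem that the paper invokes as a black box, and your exponent bookkeeping is consistent with it. The genuine gap is the step you yourself flag as ``the main obstacle'': the normal reference operator $N$ with $\s(N)\subseteq\s(T_\p)$ and $T_\hp-N\in\css_2$ does not exist in general. Although $T_\hp$ is essentially normal (and your Dirichlet-integral estimate for the Hankel/commutator terms correctly quantifies this), the Brown--Douglas--Fillmore index obstruction forbids writing it as normal plus compact whenever the symbol curve has nonzero winding about some point: already for $\p(t)=t$ (which satisfies all hypotheses of the theorem, with $\s(T_\p)=\ovl{\bd}$ convex) the operator $T_\hp$ is the Bergman shift, with Fredholm index $-1$ throughout $\bd$, so no decomposition $N+K$ with $N$ normal and $K$ compact exists. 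The decomposition is a global statement about the operator, so restricting attention to the unbounded Fredholm component, where the index vanishes, does not rescue it.

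The paper's route avoids normality altogether. It takes as the unperturbed operator the (non-normal) Hardy--Toeplitz operator itself, realized as the matrix $\ct_\p=[b_{i-j}]$ on $\ell^2(\bz_+)$, and replaces the identity $\|(N-\l)^{-1}\|={\rm dist}^{-1}(\l,\s(N))$ by Peller's theorem: for $\p$ in the Wiener algebra (guaranteed by $\p\in W^{1,2}(\bt)$) the resolvent of $T_\p$ satisfies $\|(T_\p-z)^{-1}\|\le C\,{\rm dist}^{-1}(z,\s(T_\p))$, i.e.\ the bound \eqref{boures} with $p=1$. The perturbation is then $\ct_\hp-\ct_\p$, whose entries are computed explicitly as $\tau_{i,j}-b_{i-j}=\bigl(\sqrt{(\min(i,j)+1)/(\max(i,j)+1)}-1\bigr)b_{i-j}$, giving the clean bound $\|\ct_\hp-\ct_\p\|_{\css_2}^2\le\tfrac{\pi^2}{24}\|\p'\|_2^2$ by a direct summation; no Hankel-operator theory is needed. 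With these two inputs, Theorem \ref{t2} yields \eqref{btdisc0} immediately. If you want to salvage your scheme, the fix is precisely to substitute Peller's resolvent estimate for the normality of the reference operator; everything downstream of your growth estimate then goes through unchanged.
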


\begin{corollary}\label{c1}
Let $q$ and $p$ be algebraic polynomials, $\p=\ovl{q}+p$ be a harmonic polynomial, and assume that the image $\p(\bt)$ is a Jordan curve
without cusps. Then $\eqref{btdisc0}$ holds for the discrete spectrum of BT-operator~$T_\hp$.
\end{corollary}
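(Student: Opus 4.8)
The plan is to obtain the corollary as a direct specialisation of Theorem \ref{t1}, so the whole task reduces to verifying its three hypotheses for the harmonic polynomial $\p=\ovl q+p$. The structural and regularity requirements are immediate: the polynomials $p,q$ lie in $H^\infty(\bt)$, so $\p$ is of the form \eqref{e04} with $f=p$ and $g=q$; moreover $\p$ is a trigonometric polynomial, hence $\p\in C^\infty(\bt)\subset W^{1,2}(\bt)$ and $\|\p'\|_2<\infty$. Thus the only substantial point is to show that the spectrum $\s(T_\p)$ is circularly convex.

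To identify $\s(T_\p)$ I would invoke the classical winding-number description of the spectrum of a Toeplitz operator with continuous symbol. For $\l\notin\p(\bt)$ the operator $T_\p-\l=T_{\p-\l}$ is Fredholm with index $-{\rm wind}(\p,\l)$, and it is invertible precisely when this winding number vanishes; hence $\s(T_\p)=\p(\bt)\cup\{\l\notin\p(\bt):{\rm wind}(\p,\l)\neq0\}$. Since $\p(\bt)$ is assumed to be a Jordan curve, the Jordan curve theorem splits $\bc\bsl\p(\bt)$ into a bounded domain $\oo$ and the unbounded component, the latter being exactly $\cf_0(T_\p)$; the winding number is a (locally) constant integer on $\oo$ and zero on the unbounded component. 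Consequently $\s(T_\p)$ is either the closed Jordan region $\ovl\oo$ bounded by $\p(\bt)$ (when that integer is nonzero, e.g. whenever $\p$ parametrises $\p(\bt)$ injectively) or the curve $\p(\bt)$ itself; in both cases it is a compact set whose boundary is the Jordan curve $\p(\bt)$.

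It remains to check that this set is circularly convex. Its boundary $\p(\bt)$ is the image of the trigonometric polynomial $\p$, hence a $C^\infty$ curve; the simplicity (Jordan) hypothesis excludes self-intersections, and the ``no cusps'' hypothesis guarantees no singular points, so that $\p(\bt)$ is a regular $C^2$ Jordan curve. By the remark following Definition \ref{d2} (together with the stated equivalence of the two notions for Jordan curves), a compact set with such a boundary is circularly convex, so $\s(T_\p)$ meets the last hypothesis of Theorem \ref{t1}, and \eqref{btdisc0} follows verbatim. The one delicate step is precisely this last one: one must read ``without cusps'' as the non-vanishing of the tangent, so that $\p(\bt)$ is genuinely a $C^2$-embedded boundary rather than a curve with corners or tangential self-contact, and confirm that it is the entire topological boundary of $\s(T_\p)$; the winding-number identification of $\s(T_\p)$ is then routine for continuous symbols.
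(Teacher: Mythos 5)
Your proposal is correct and is essentially the argument the paper intends: the corollary is left unproved there, but it is meant to follow from Theorem \ref{t1} exactly as you describe, by noting $\p\in W^{1,2}(\bt)$ for a harmonic polynomial and using the classical winding-number description of $\s(T_\p)$ together with the remark after Definition \ref{d2} (a compact set bounded by a regular $C^2$ Jordan curve is circularly convex). You also correctly flag the only delicate point, namely that ``Jordan curve without cusps'' is what guarantees the $C^2$-regularity of $\partial\s(T_\p)$ needed for circular convexity.
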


\section{Some preliminaries}\label{s1}
\subsection{Generalities from operator theory}\label{s11}
In this section, we recall some well-known notions of the classical operator theory, see Kato \cite[Section IV.5]{ka1}. 

Let $T$ be a bounded linear operator on a (separable) Hilbert space $H$. As usual, the resolvent set of $T$ is
\begin{equation}\label{e5}
\r(T):=\{\l\in\bc: \ (T-\l):H\to H \ \mathrm{is \ bijective} \ \}.\\
\end{equation}
It follows that $(T-\l)^{-1}$ is bounded for $\l\in \r(T)$. The spectrum of $T$ is defined as 
\begin{equation}\label{e6}
\s(T)=\bc\bsl\r(T).
\end{equation}

Furthermore, we say that a bounded operator $T$ is \emph{Fredholm}, if its kernel and co-kernel are of finite dimension. The essential spectrum 
of $T$ is defined as
$$
\s_{ess}(T)=\{\l\in\bc: (T-\l) \ \mathrm{is \ not \ Fredholm}\}.
$$
One can see that $\s_{ess}(T)$ is a closed subset of $\s(T)$. One considers also the Fredholm domain of $T$, $\cf(T)=\bc\bsl\s_{ess}(T)$. 
Clearly, $\r(T)\subset \cf(T)$. We represent $\cf(T)$ as 
\begin{equation}\label{e7}
\cf(T)=\bigcup^\infty_{j=0} \cf_j(T),
\end{equation}
where $\cf_j(T)$ are disjoint (open) connected components of the set. We agree that 
$\cf_0(T)$ stays for the unbounded connected component of $\cf(T)$.

The discrete spectrum $\s_d(T)$ of $T$ is the set of all isolated eigenvalues of $T$ of finite algebraic multiplicity. For convenience, we put
\begin{equation}\label{e01}
\s_0(T):=\s_d(T)\cap \cf_0(T)\subset \s_d(T).
\end{equation}

Let $A_0, A$  be bounded operators on a Hilbert space such that $A-A_0$ is compact. The operators $A$ and $A_0$ are called compact perturbations 
of each other. The celebrated Weyl's theorem states that 
\begin{equation}\label{e8}
\s_{ess}(A)=\s_{ess}(A_0),
\end{equation}
see Kato \cite[Section IV.5.6]{ka1}. 

We shall be interested in the situation when $\s_0(A)$ is at most countable set, $\s_0(A)=\{\l_j\}_{j\ge 1}$ and it accumulates to the essential 
spectrum $\s_{ess}(A_0)$ only. 

\subsection{Reminder on Hilbert-Schmidt operators}\label{s12}
In this subsection, we recall briefly the notion of a Hilbert-Schmidt operator and its simplest pro\-perties, see Birman-Solomyak \cite[Section 11.3]{biso}. 

Let $A$ be a compact operator. The sequence of \emph{singular values} $\{s_j(A)\}_{j\ge 1}$ is defined as
$$
s_j(A)=\l_j(A^*A)^{1/2}, \qquad s_j(A)\ge0,
$$
where $\l_j(A^*A)$ are eigenvalues of the compact operator $A^*A$. Without loss of generality one can suppose that $\{s_j(A)\}_{j\ge 1}$ 
forms a decreasing sequence, and, moreover
$$
\lim_{j\to+\infty} s_j(A)=0.
$$
One says that $A\in \css_2$, the Hilbert-Schmidt class of compact operators, iff
$$
\|A\|^2_{\css_2}:=\sum_{j\ge 1} s_j(A)^2<\infty.
$$
Equivalently, $A\in \css_2$ if and only if $\{s_j(A)\}_{j\ge 1}\in \ell^2$. Alternatively, the $\css_2$-norm of the operator can be computed as
$$
\|A\|^2_{\css_2}=\sum_{j,k\ge 1}|(Ae_j,e_k)|^2,
$$
where $\{e_j\}_{j\ge 1}$ is an arbitrary orthonormal basis in the given Hilbert space.

\subsection{On the discrete spectrum of a perturbed operator: a result of Favorov--Golinskii}\label{s13}
Some useful quantitative bounds for the rate of convergence of the discrete spectrum of a perturbed operator are given in Favorov--Golinskii \cite[Section~5]{fago15}.
A special case of \cite[Theorem 5.1]{fago15} (cf. a remark right after its proof and formula (5.8)) looks as follows.

\begin{theorem}[{\cite{fago15}}]
\label{t2}
Let $A_0$ be a bounded linear operator on a Hilbert space, which satisfies the conditions:
\begin{enumerate}
\item The spectrum $\s(A_0)$ is an $r$-convex set.
\item The resolvent $R(z,A_0)=(A_0-z)^{-1}$ is subject to the bound
\begin{equation}\label{boures}
\|R(z,A_0)\|\le\frac{C(A_0)}{{\rm dist}^p(z,\s(A_0))}\,, \qquad p>0, \quad z\in\cf_0(A_0).
\end{equation}
\end{enumerate}
Let $B$ be a Hilbert--Schmidt operator, and $A=A_0+B$. Then for each $\ep>0$
\begin{equation}\label{litir}
\sum_{\l\in\s_d(A)\cap\cf_0(A_0)} {\rm dist}^{2p+1+\ep}\,(\l,\s(A_0))\le C(\s(A_0),p,\ep)\,\|B\|_2^2.
\end{equation}
\end{theorem}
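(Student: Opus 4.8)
The plan is to reduce this spectral estimate to a function-theoretic statement about the zeros of a single scalar holomorphic function, and then to feed that statement into the Blaschke-type distribution-of-zeros theorem of Borichev--Golinskii--Kupin \cite{bgk1,bgk2}, whose geometric hypothesis is precisely the $r$-convexity assumed in (1). The bridge between the operator $A=A_0+B$ and complex analysis is the regularized perturbation determinant, whose zeros encode the discrete spectrum of $A$ off $\s(A_0)$ and whose growth near $\s(A_0)$ is controlled by the resolvent bound (2) together with the Hilbert--Schmidt norm of $B$.

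First I would set up the determinant. Since $B\in\css_2$ and $R(z,A_0)$ is bounded on $\r(A_0)$, the product $BR(z,A_0)$ lies in $\css_2$ and depends holomorphically on $z\in\r(A_0)$; hence the $2$-regularized (Carleman) determinant
$$ h(z):=\det\nolimits_2\bigl(I+BR(z,A_0)\bigr) $$
is holomorphic on the unbounded component $\oo$ of $\bc\bsl\s(A_0)$, and, because $R(z,A_0)\to0$ as $z\to\infty$, it is normalized by $h(\infty)=1$. The algebraic key is the factorization $I+BR(z,A_0)=(A-z)R(z,A_0)$, valid on $\r(A_0)$, which shows that $I+BR(z,A_0)$ is invertible exactly when $z\in\r(A)$; consequently the zeros of $h$ in $\oo$ coincide, with multiplicities, with the eigenvalues of $A$ in $\oo$, the order of each zero equalling the algebraic multiplicity of the eigenvalue. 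Any point of $\s_d(A)\cap\cf_0(A_0)$ lying on $\s(A_0)$ contributes $0$ to the sum in \eqref{litir} (its distance to $\s(A_0)$ vanishes), so it suffices to count the zeros of $h$ in $\oo$.

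Next I would bound the growth of $h$. The standard Carleman inequality $|\det\nolimits_2(I+K)|\le\exp(\tfrac12\|K\|_{\css_2}^2)$ for $K\in\css_2$, combined with $\|BR(z,A_0)\|_{\css_2}\le\|B\|_{\css_2}\,\|R(z,A_0)\|$ and hypothesis (2), gives
$$ \log|h(z)|\le\frac{C(A_0)^2\,\|B\|_{\css_2}^2}{2\,{\rm dist}^{2p}(z,\s(A_0))}\,,\qquad z\in\oo. $$
Thus $\log|h|$ grows no faster than ${\rm dist}^{-2p}(\cdot,\s(A_0))$ at the boundary, with a constant proportional to $\|B\|_{\css_2}^2$. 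Since $\s(A_0)$ is $r$-convex (Definition \ref{d1}), I would now invoke the Borichev--Golinskii--Kupin theorem, which for a holomorphic function on the complement of such a compact set converts a boundary growth of order ${\rm dist}^{-\alpha}$ into summability of the zeros with exponent $\alpha+1+\ep$. Taking $\alpha=2p$ yields $\sum_{h(\l)=0,\ \l\in\oo}{\rm dist}^{2p+1+\ep}(\l,\s(A_0))\le C(\s(A_0),p,\ep)\,\|B\|_{\css_2}^2$, and since the zero set of $h|_\oo$ contains all $\l\in\s_d(A)\cap\cf_0(A_0)$ with positive distance to $\s(A_0)$, restricting to that subset only shrinks the left-hand side, which is exactly \eqref{litir}. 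The extra unit in the exponent is the familiar loss in passing from a growth estimate to a Blaschke condition, and $\ep>0$ is the usual convergence slack.

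The main obstacle is the last step: converting the boundary growth of $\log|h|$ into the weighted zero sum for an \emph{arbitrary} $r$-convex compact, rather than a disk or a smooth Jordan domain. This requires integrating the growth against the harmonic measure (equivalently, the Green's function) of $\oo$ and controlling how the geometry of $\s(A_0)$ distributes the zeros near the boundary; the $r$-convexity is the minimal regularity that makes these harmonic-measure estimates uniform, and supplying them is precisely the content of the Borichev--Golinskii--Kupin machinery, so hypothesis (1) cannot be dropped. A secondary point to check is the exact matching of the orders of zeros of the regularized determinant with the algebraic multiplicities of the eigenvalues of $A$, which rests on the factorization above and the holomorphy of $z\mapsto BR(z,A_0)$ in the $\css_2$-topology.
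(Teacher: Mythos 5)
Your proposal is correct and follows essentially the same route as the source of this statement — the paper itself gives no proof, quoting Theorem 5.1 of \cite{fago15}, whose argument is exactly yours: the regularized determinant $h(z)=\det\nolimits_2(I+BR(z,A_0))$, holomorphic on the unbounded component of $\bc\bsl\s(A_0)$ with $h(\infty)=1$ and zeros matching $\s_d(A)$ with algebraic multiplicities via the factorization $I+BR(z,A_0)=(A-z)R(z,A_0)$, then Carleman's inequality plus hypothesis (2) giving $\log|h(z)|\le \tfrac12\,C(A_0)^2\|B\|_{\css_2}^2\,{\rm dist}^{-2p}(z,\s(A_0))$, and finally a Blaschke-type zero-distribution theorem with exponent $2p+1+\ep$. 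The one attribution slip: for a general $r$-convex compact the needed function-theoretic theorem is Favorov--Golinskii's own (that is the main content of \cite{fago15}), not Borichev--Golinskii--Kupin \cite{bgk1,bgk2}, which treats the unit disk and related domains; the two checkpoints you flag (zero orders equal algebraic multiplicities, and the harmonic-measure estimates under $r$-convexity) are precisely what \cite{fago15} supplies.
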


If $\s_{ess}(A_0)$ does not split the plane, and \eqref{boures} holds for all 
$\l\in\bc\bsl\s(A_0)$, then \eqref{litir} is true for the whole discrete spectrum $\s_d(A)$. 
For the class of (non-selfadjoint) HT-operators $A_0=T_\p$, $\p$ in \eqref{e04}, the essential spectrum, in general, splits the plane.

\section{Proof of the main result}\label{s2}
Let $\p$ be as in \eqref{e04}. Consider the HT-operator $T_\p$ \eqref{e02} and the associated BT-operator $T_\hp$ \eqref{e03}. 
The technical way to compare these operators is to look at their matrices in appropriately chosen bases. Namely, define
\begin{equation}\label{e05}
e_{H,n}(t)=t^n, \quad e_{B,n}(z)=\sqrt{n+1}\, z^n, \quad n\ge 0.
\end{equation}
It is plain that the systems $\{e_{H,n}\}_{n\ge 0}$ and $\{e_{B,n}\}_{n\ge 0}$ are the orthonormal bases in $H^2(\bt)$ and $L^2_a(\bd)$, respectively. Set
\begin{equation}\label{e07}
\ct_\p=[(T_\p e_{H,i}, e_{H,j})_{H^2(\bt)}]_{i,j\ge 0},\quad 
\ct_\hp=[(T_\hp e_{B,i}, e_{B,j})_{L^2_a(\bt)}]_{i,j\ge 0}.
\end{equation}
The operators $\ct_\p$ and $\ct_\hp$ are unitarily equivalent to original operators $T_\p$ and $T_\hp$, and they both act on $\ell^2(\bz_+)$. 
So, one can argue on the operators $\ct_\p$ and $\ct_\hp$ being ``close" in a certain sense. 

Rewriting relation \eqref{e04} in more detailed form, we have
\begin{equation}\label{harmsym}
\begin{split}
\hp(z) &=\ovl{g(z)}+f(z), \\ 
f(z) &=\sum_{k=0}^\infty f_kz^k\in H^\infty(\bt), \qquad g(z)=\sum_{k=0}^\infty g_kz^k\in H^\infty(\bt), \\
\p(e^{i\th}) &=\hp(e^{i\th})=\sum_{j\in\bz} b_je^{ij\th}\,, \quad b_j=
\begin{cases} f_j,\ & j\ge0, \\
      \ovl{g_{-j}}, \ & j<0.
\end{cases}
\end{split}
\end{equation}

\begin{proposition}\label{hilsch}
Assume that the symbol $\p$ \eqref{e04} belongs to $W^{1,2}(\bt)$. Then $\ct_\hp-\ct_\p$ is a Hilbert-Schmidt operator and
\begin{equation}\label{difnorm}
\|\ct_\hp-\ct_\p\|_{\css_2}^2\le \frac{\pi^2}{24}\,\|\p'\|_2^2.
\end{equation}
\end{proposition}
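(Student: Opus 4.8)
The plan is to write both operators as explicit infinite matrices in the bases \eqref{e05}, subtract them entrywise, and bound the resulting Hilbert--Schmidt norm diagonal-by-diagonal against a Fourier-side expression for $\|\p'\|_2$. The whole argument is a bookkeeping computation resting on one sharp elementary inequality.

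First I would record the two matrices. From $T_\p e_{H,i}=P_+(\p\,t^i)$ and $\p=\sum_{k\in\bz}b_k t^k$ one reads off the Toeplitz pattern $[\ct_\p]_{i,j}=b_{j-i}$. For the Bergman side, since $\hat P_+$ is the self-adjoint projection onto $L^2_a(\bd)$ and $e_{B,j}\in L^2_a(\bd)$, the entry collapses to $[\ct_\hp]_{i,j}=\sq{(i+1)(j+1)}\,(\hp z^i,z^j)$. Computing this inner product against monomials via $\frac1\pi\int_\bd z^a\bar z^b\,dA=\delta_{a,b}/(a+1)$ together with the expansion \eqref{harmsym}, the $f$-part survives only strictly above the diagonal and the $\bar g$-part only strictly below it, while on the diagonal the two matrices coincide. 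Explicitly, for $j>i$ one obtains $[\ct_\hp]_{i,j}=\sq{(i+1)/(j+1)}\,f_{j-i}$ and for $j<i$ one obtains $\sq{(j+1)/(i+1)}\,\ovl{g_{i-j}}$, so the difference has vanishing diagonal and off-diagonal entries $f_{j-i}\bigl(\sq{(i+1)/(j+1)}-1\bigr)$ and $\ovl{g_{i-j}}\bigl(\sq{(j+1)/(i+1)}-1\bigr)$.

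Next I would group the Hilbert--Schmidt norm by diagonals. Writing $m=|j-i|\ge1$, $n=\min(i,j)\ge0$, and shifting $k=n+1$, both triangles collapse onto the same one-parameter sum:
\begin{equation}\label{hssum}
\|\ct_\hp-\ct_\p\|_{\css_2}^2=\sum_{m\ge1}\bigl(|f_m|^2+|g_m|^2\bigr)\,S_m,\qquad S_m:=\sum_{k\ge1}\Bigl(1-\sq{\tfrac{k}{k+m}}\Bigr)^2.
\end{equation}
On the other hand, Parseval applied to $\p'=\sum_{j\in\bz}ij\,b_j e^{ij\th}$ gives $\|\p'\|_2^2=\sum_{j}j^2|b_j|^2=\sum_{m\ge1}m^2\bigl(|f_m|^2+|g_m|^2\bigr)$. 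Comparing \eqref{hssum} against this term by term, the proposition reduces to the single estimate $S_m\le\frac{\pi^2}{24}\,m^2$.

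The crux, and the only step requiring any thought, is the sharp pointwise bound
$$
1-\sq{\tfrac{k}{k+m}}\le\frac{m}{2k},\qquad k,m\ge1.
$$
If $m\ge2k$ the right-hand side is $\ge1$ and the bound is immediate; if $m<2k$ both sides are positive, and after squaring and clearing denominators with $x:=m/k\in(0,2)$ it reduces to $x^2(x-3)\le0$, which holds since $x<3$. Squaring this estimate and summing yields $S_m\le\frac{m^2}{4}\sum_{k\ge1}k^{-2}=\frac{m^2}{4}\cdot\frac{\pi^2}{6}=\frac{\pi^2}{24}\,m^2$, and substituting into \eqref{hssum} gives precisely \eqref{difnorm}. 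It is the Basel constant $\zeta(2)=\pi^2/6$ that pins down the sharp factor $\pi^2/24$; any coarser control of $S_m$ would spoil it.
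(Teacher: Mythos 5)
Your proof is correct and follows essentially the same route as the paper's: compute the entries $\tau_{i,j}=\sqrt{(\min(i,j)+1)/(\max(i,j)+1)}\,b_{j-i}$, sum the Hilbert--Schmidt norm diagonal by diagonal, and reduce to the bound $1-\sqrt{k/(k+m)}\le m/(2k)$ together with $\sum_{k\ge1}k^{-2}=\pi^2/6$. The paper obtains that same key inequality by rationalizing $\sqrt{k+l+1}-\sqrt{k+1}$ rather than via your $x^2(x-3)\le0$ computation, but this is a cosmetic difference.
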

\begin{proof}
The matrix representation of $\ct_\p$ is obvious: $\ct_\p=[b_{i-j}]_{i,j\ge0}$. So let us compute the matrix $\ct_\hp=[\tau_{i,j}]_{i,j\ge0}$ in the orthonormal basis $\{e_{B,n}\}_{n\ge 0}$ \eqref{e05}. 
For $l,k\ge0$
\begin{equation*}
\begin{split}
\tau_{k,k+l} &=(T_\hp e_{B, k+l}, e_{B, k})_{L^2_a(\bd)}=\frac1{\pi}\int_\bd \hp(z)e_{B, k+l}(z)\,\ovl{e_{B,k}(z)}\,dxdy \\
&= \frac{\sqrt{(k+l+1)(k+1)}}{\pi}\,\int_\bd \hp(z)z^{k+l}\,\ovl{z^k}\,dxdy,
\end{split}
\end{equation*}
or, in polar coordinates,
\begin{equation*}
\tau_{k,k+l}=\frac{\sqrt{(k+l+1)(k+1)}}{\pi}\,\int_0^1\int_0^{2\pi} \sum_{n\in\bz} b_nr^{|n|+2k+l+1}\,e^{i(n+l)\th}\,drd\th.
\end{equation*}
Finally,
\begin{equation*}
\tau_{k,k+l}=\sqrt{\frac{k+1}{k+l+1}}\,b_{-l}, \qquad k,l\ge0.
\end{equation*}
The same formula holds for $\tau_{k+l,k}$, \ $k,l\ge0$, and so
\begin{equation}
\tau_{i,j}=\sqrt{\frac{\min(i,j)+1}{\max(i,j)+1}}\,b_{i-j}, \qquad i,j\ge0.
\end{equation}

Let us estimate the Hilbert-Schmidt norm
\begin{equation*}
\|\ct_\hp-\ct_\p\|_{\css_2}^2=\sum_{i,j\ge0}|\tau_{i,j}-b_{i-j}|^2=\sum_{i,j\ge0} |b_{i-j}|^2\,\left|1-\sqrt{\frac{\min(i,j)+1}{\max(i,j)+1}}\right|^2.
\end{equation*}
As
$$ \sum_{i,j\ge0} a_{ij}=\sum_{l=0}^\infty \sum_{k=0}^\infty a_{k,k+l}+\sum_{l=1}^\infty \sum_{k=0}^\infty a_{k+l,k}, $$
we have
\begin{equation*}
\begin{split}
  \|\ct_\hp-\ct_\p\|_{\css_2}^2 &=\sum_{l=0}^\infty l^2\left[|b_{l}|^2 + |b_{-l}|^2 \right] \, \sum_{k=0}^\infty \frac1{(k+l+1)\Bigl(\sqrt{k+l+1}+\sqrt{k+1}\Bigr)^2}\\
 &=\sum_{l\in \bz} |l \, b_{l}|^2 \, \sum_{k=0}^\infty \frac1{(k+|l|+1)\Bigl(\sqrt{k+|l|+1}+\sqrt{k+1}\Bigr)^2}\\
  & \le \lp\sum_{l \in \bz} |lb_l|^2\rp\, \lp\sum_{k=0}^\infty \frac1{(k+1) \, (2 \, \sqrt{k+1})^2}\rp\\
  & =\sum_{l \in \bz} |lb_l|^2\ \sum_{k=0}^\infty \frac1{ 4 \, (k+1)^2}
=\frac{\pi^2}{4\cdot 6}\,\|\p'\|^2_2=\frac{\pi^2}{24}\,\|\p'\|_2^2.
\end{split}
\end{equation*}
as claimed.
\end{proof}

Weyl's theorem concerning spectra of compact perturbations, mentioned above, leads to the following
\begin{corollary}\label{weyl}
Let the symbol $\p$ satisfy hypothesis of the above Proposition. Then the essential spectrum of BT-operator $T_\hp$ is
\begin{equation*}
\s_{ess}(T_\hp)=\s_{ess}(T_\p)=\p(\bt)=:\gg,
\end{equation*}
and the discrete spectrum $\s_d(T_\hp)$ is at most countable set of eigenvalues of finite algebraic multiplicity with all its accumulation points on $\gg$.
\end{corollary}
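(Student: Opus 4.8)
The plan is to deduce the corollary from Proposition~\ref{hilsch} and Weyl's theorem \eqref{e8}, after first recalling the classical description of the essential spectrum of a Hardy--Toeplitz operator with continuous symbol. The key preliminary observation is that a function in $W^{1,2}(\bt)$ is absolutely continuous, hence continuous on $\bt$; so the symbol $\p$ in \eqref{e04} is continuous, and the standard theory of Toeplitz operators with continuous symbols becomes available.

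First I would invoke that classical theory: for continuous $\p$ the operator $T_\p-\l=T_{\p-\l}$ is Fredholm if and only if $\p-\l$ has no zero on $\bt$, and therefore
\begin{equation*}
\s_{ess}(T_\p)=\{\l\in\bc:\ \l\in\p(\bt)\}=\p(\bt)=:\gg .
\end{equation*}
Being the continuous image of the connected set $\bt$, the curve $\gg$ is connected, which is the content of Widom's result cited in Section~\ref{s0}.

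Next I would transfer this to the BT-operator. By Proposition~\ref{hilsch} the difference $\ct_\hp-\ct_\p$ lies in $\css_2$ and is in particular compact, so $\ct_\hp$ and $\ct_\p$ are compact perturbations of one another. Weyl's theorem \eqref{e8} then gives $\s_{ess}(\ct_\hp)=\s_{ess}(\ct_\p)$. Since $\ct_\p,\ct_\hp$ are unitarily equivalent to $T_\p,T_\hp$ (as recorded after \eqref{e07}) and the essential spectrum is a unitary invariant, we obtain $\s_{ess}(T_\hp)=\s_{ess}(T_\p)=\gg$, which is the first assertion.

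Finally, for the discrete spectrum I would argue from the structure of $\s(T_\hp)$ inside the Fredholm domain $\cf(T_\hp)=\bc\bsl\gg$. Every element of $\s_d(T_\hp)$ is an isolated eigenvalue of finite algebraic multiplicity, hence a Fredholm point, so $\s_d(T_\hp)\subset\cf(T_\hp)$. On each connected component of $\cf(T_\hp)$ the Fredholm index is constant; on a component of nonzero index the whole (open) component belongs to $\s(T_\hp)$ and contributes no isolated eigenvalues, while on a component of index zero the operator $T_\hp-\l$ fails to be invertible only on a discrete subset, by the analytic Fredholm theorem, each such point being an eigenvalue of finite algebraic multiplicity. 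Thus $\s_d(T_\hp)$ is a set of isolated points, hence at most countable, and none of its accumulation points can lie in the open set $\cf(T_\hp)$; they must therefore belong to $\pt\cf(T_\hp)\subset\gg$. The only nonroutine ingredient is the classical identity $\s_{ess}(T_\p)=\p(\bt)$; the remaining steps are immediate consequences of compactness, Weyl's theorem, and analytic Fredholm theory.
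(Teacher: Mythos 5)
Your proof is correct and follows the same route the paper takes: the paper offers no argument beyond the one-line invocation of Weyl's theorem applied to the Hilbert--Schmidt (hence compact) perturbation from Proposition~\ref{hilsch}, and you simply write out the standard ingredients it leaves implicit (the identity $\s_{ess}(T_\p)=\p(\bt)$ for the continuous symbol $\p\in W^{1,2}(\bt)$, unitary invariance via \eqref{e07}, and the analytic Fredholm argument locating the accumulation points of $\s_d(T_\hp)$ on $\gg$). No gaps.
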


\smallskip
We go on with the proof the quantitative version of the above Corollary. 

\medskip
\nt\emph{Proof of Theorem \ref{t1}.} \ \ 
Let $A_0=\ct_\p$, $A=\ct_\hp$, see \eqref{e07}. We only have to ensure that the conditions of Theorem \ref{t2} are met.

It is clear that $\p\in W^{1,2}(\bt)$ implies that $\p\in W$, the Wiener algebra of absolutely convergent Fourier series.
By \cite[Theorem~4]{pel}, the resolvent $(A_0-z)^{-1}$ admits the linear growth, that is, \eqref{boures} holds with $p=1$.
Next, by Proposition \ref{hilsch}, the difference $A-A_0$ is the Hilbert-Schmidt operator with the norm bound \eqref{difnorm}. The proof is complete.
\hfill $\Box$

\medskip
\nt{\bf Acknowledgements. } \  SK, JL, MN are partially supported by
the project ANR-18-CE40-0035.

%
%

\end{document}